\def\F{{\mathcal F}}
\def\frk{\frak}               
\def\Phi{{\frk n}}
\def\Phi{{\frk N}}
\def\opn#1#2{\def#1{\operatorname{#2}}} 
\opn\chara{char} \opn\length{\ell} \opn\pd{pd} \opn\rk{rk}
\opn\projdim{proj\,dim} \opn\injdim{inj\,dim} \opn\rank{rank}
\opn\depth{depth} \opn\grade{grade} \opn\height{height}
\opn\embdim{emb\,dim} \opn\codim{codim}
\opn\Tr{Tr} \opn\bigrank{big\,rank}
\opn\superheight{superheight}\opn\lcm{lcm}
\opn\trdeg{tr\,deg}
\opn\reg{reg} \opn\lreg{lreg} \opn\ini{in} \opn\lpd{lpd}
\opn\size{size}
\opn\div{div} \opn\Div{Div} \opn\cl{cl} \opn\Cl{Cl}
\opn\Spec{Spec} \opn\Supp{Supp} \opn\supp{supp} \opn\Sing{Sing}
\opn\Ass{Ass} \opn\Min{Min}
\opn\Ann{Ann} \opn\Rad{Rad} \opn\Soc{Soc}
\opn\Im{Im} \opn\Ker{Ker} \opn\Coker{Coker} \opn\Am{Am}
\opn\Hom{Hom} \opn\Tor{Tor} \opn\Ext{Ext} \opn\End{End}
\opn\Aut{Aut} \opn\id{id}
\opn\nat{nat}
\opn\pff{pf}
\opn\Pf{Pf} \opn\GL{GL} \opn\SL{SL} \opn\mod{mod} \opn\ord{ord}
\opn\Gin{Gin} \opn\Hilb{Hilb}
\opn\aff{aff} \opn\con{conv} \opn\relint{relint} \opn\st{st}
\opn\lk{lk} \opn\cn{cn} \opn\core{core} \opn\vol{vol}
\opn\link{link} \opn\star{star}
\opn\gr{gr}
\def\pot#1#2{#1[\kern-0.28ex[#2]\kern-0.28ex]}
\opn\dirlim{\underrightarrow{\lim}}
\opn\inivlim{\underleftarrow{\lim}}
\def\Implies{\ifmmode\Longrightarrow \else
        \unskip${}\Longrightarrow{}$\ignorespaces\fi}
\def\implies{\ifmmode\Rightarrow \else
        \unskip${}\Rightarrow{}$\ignorespaces\fi}
\def\iff{\ifmmode\Longleftrightarrow \else
        \unskip${}\Longleftrightarrow{}$\ignorespaces\fi}
\newtheorem{Theorem}{Theorem}[section]
\newtheorem{Remark}[Theorem]{Remark}
\newtheorem{Example}[Theorem]{Example}
\newtheorem{Definition}[Theorem]{Definition}
\opn\Syz{Syz} \opn\Im{Im} \opn\Ker{Ker} \opn\Coker{Coker}
\opn\Am{Am} \opn\Hom{Hom} \opn\Tor{Tor} \opn\Ext{Ext} \opn\End{End}
\opn\Aut{Aut} \opn\id{id}
\opn\nat{nat}
\opn\pff{pf}
\opn\Pf{Pf} \opn\GL{GL} \opn\SL{SL} \opn\mod{mod} \opn\ord{ord}
\opn\Gin{Gin}\opn\min{min}
\opn\Hilb{Hilb}\opn\adeg{adeg}\opn\std{std}\opn\ip{infpt}
\opn\Pol{Pol}\opn\sdepth{sdepth}\opn\infpt{infpt}
\opn\depth{depth}\opn\sqdepth{sqdepth}\opn{\Mon}{Mon}
\let\epsilon\varepsilon
\let\phi=\varphi
\let\kappa=\varkappa
\def\qed{\ifhmode\textqed\fi
      \ifmmode\ifinner\quad\qedsymbol\else\dispqed\fi\fi}
\def\textqed{\unskip\nobreak\penalty50
       \hskip2em\hbox{}\nobreak\hfil\qedsymbol
       \parfillskip=0pt \finalhyphendemerits=0}
\def\dispqed{\rlap{\qquad\qedsymbol}}
\opn\dis{dis}
\def\pnt{{\raise0.5mm\hbox{\large\bf.}}}
\opn\Lex{Lex}
\begin{document}
\title
{Characterizations of Line Simplicial Complexes}

\author{Imran Ahmed, Shahid Muhmood}

\address{COMSATS Institute of Information Technology, Lahore, Pakistan}
\email{drimranahmed@ciitlahore.edu.pk}
\address{COMSATS Institute of Information Technology, Lahore, Pakistan}
\email{shahid\_nankana@yahoo.com}

 \maketitle
\begin{abstract} Let $G$ be a finite simple graph. The line graph $L(G)$ represents the adjacencies
between edges of $G$. We define first the line simplicial complex
$\Delta_L(G)$ of $G$ containing Gallai and anti-Gallai simplicial
complexes $\Delta_{\Gamma}(G)$ and $\Delta_{\Gamma'}(G)$
(respectively) as spanning subcomplexes. The study of connectedness
of simplicial complexes is interesting due to various combinatorial
and topological aspects. In Theorem \ref{t1}, we prove that the line
simplicial complex $\Delta_L(G)$ is connected if and only if $G$ is
connected. In Theorem \ref{t2}, we establish the relation between
Euler characteristics of line and Gallai simplicial complexes. In Section 4, we discuss the shellability of line
and anti-Gallai simplicial complexes associated to various classes
of graphs.
 \vskip 0.4 true cm
 \noindent
 \noindent
 \noindent
{\it Key words}: Euler
characteristic, simplicial complex, facet ideal, connected simplicial complex and Shellability.\\
{\it 2010 Mathematics Subject Classification: Primary 05E25, 55U10, 13P10 Secondary 06A11, 13H10.}\\
\end{abstract}

\pagestyle{myheadings} \markboth{\centerline {\scriptsize Ahmed and
Muhmood}}
         {\centerline {\scriptsize Characterizations of Line and Anti-Gallai Simplicial
Complexes}}

\maketitle

\section{Introduction}

Let $\Delta$ be a simplicial complex on the vertex set
$[n]=\{1,\ldots,n\}$ and denote by $\alpha_k$ the number of
$k$-cells of $\Delta$. Then, the Euler characteristic of the
simplicial complex $\Delta$ is given by
$$\chi(\Delta)=\sum\limits_{k=0}^{n-1}(-1)^k\alpha_k.$$

The Euler characteristic is a famous topological and homotopic
invariant to classify surfaces, see \cite{H} and \cite{M}. The
excision is one of the most useful property of Euler characteristic,
given by $\chi(\Delta)=\chi(C)+\chi(\Delta\backslash C)$, for every
closed subset $C\subset \Delta$. The excision property has a dual
form $\chi(\Delta)=\chi(U)+\chi(\Delta\backslash U)$, for every open
subset $U\subset \Delta$.  This property is frequently used under
the guise of the inclusion-exclusion formula.

The shellability of a simplical complex $\Delta$ is a well-studied
combinatorial property that carries strong geometric and algebraic
interpretations, see for example \cite{S}. In many situations,
proving shellability is the most efficient way  of establishing
Cohen-Macaulayness, see for instance \cite{BW}. The algebraic
criterion for the shellability of a simplicial complex has been
firstly introduced by A. Dress \cite{D}. In \cite{ER}, Eagon and
Reiner gave algebraic criterion of the pure shellability of a dual
simplicial complex $\check{\Delta}$ in the context of the
Stanley-Reisner ideal theory.

Recently, in \cite{AKNS}, Anwar, Kosar and Nazir gave a translation
of the shellability of a simplicial complex $\Delta$ on the monomial
generators of the facet ideal $I_{\mathcal{F}}(\Delta)$. Their
algebraic translation provided an useful class of ideals known as
ideals with Linear residuals.

Let $G$ be a finite simple graph. The line graph $L(G)$ of $G$ is a
graph having edges of $G$ as its vertices and two distinct edges of
$G$ are adjacent in $L(G)$ if they are adjacent in $G$. It was
firstly introduced by Harary and Norman in \cite{HN}.

Both the Gallai and anti-Gallai graphs $\Gamma(G)$ and $\Gamma'(G)$
of a graph $G$ have the edges of $G$ as their vertices. Two edges of
$G$ are adjacent in the Gallai graph $\Gamma(G)$ if they are
incident but do not span a triangle in $G$; they are adjacent in the
anti-Gallai graph $\Gamma'(G)$ if they span a triangle in $G$, see
\cite{TG} and \cite{VBL}. The Gallai and anti-Gallai graphs are
spanning subgraphs of the line graph $L(G)$. The anti-Gallai graph
$\Gamma'(G)$ is the complement of $\Gamma(G)$ in $L(G)$.

We define first the line simplicial complex $\Delta_L(G)$ of $G$
containing Gallai and anti-Gallai simplicial complexes
$\Delta_{\Gamma}(G)$ and $\Delta_{\Gamma'}(G)$ (respectively) as
spanning subcomplexes. The study of connectedness of simplicial
complexes is interesting due to various combinatorial and
topological aspects, see \cite{BP} and \cite{MABY}. In Theorem
\ref{t1}, we prove that the line simplicial complex $\Delta_L(G)$ is
connected if and only if $G$ is connected. In Theorem \ref{t2}, we
establish the relation between Euler characteristics of line and
Gallai simplicial complexes.

In Section 4, we discuss the shellability of line
and anti-Gallai simplicial complexes associated to various classes
of graphs.

\section{Preliminaries}

A simplicial complex $\Delta$ on the vertex set $[{n}]=\{1,\ldots,
n\}$ is a subset of $2^{[n]}$ with the property that
if $F\in \Delta$ then every subset of $F$ will belong to $\Delta$.
The members of $\Delta$ are called faces and the maximal faces under inclusion are called facets. If $\mathcal{F}(\Delta)=\{F_1,\ldots ,F_h\}$ is the set of all facets of $\Delta$, then $\Delta=<F_1,\ldots ,F_h>$. A subcomplex of the simplicial complex $\Delta$ is a simplicial complex whose facet set is a subset of $\mathcal{F}(\Delta)$. The dimension of a face $F\in\Delta$ is given by $\dim F=\mid F\mid-1$, where $\mid F\mid$ is the number of vertices of $F$. The dimension of a simplical complex $\Delta$ is defined by $\dim\Delta=max\{\dim F\ |\
F\in\Delta\}$. A simplicial complex $\Delta$ is
said to be pure if it has all facets of the same dimension.

A simplicial complex $\Delta$ is said to be connected if for any two
facets $F$ and $\tilde{F}$ of $\Delta$, there exists a sequence of
facets $F=F_0,\ldots,F_q=\tilde{F}$ such that $F_i\cap F_{i+1}\neq
\emptyset$ for any $i=0,\ldots,q-1$. A disconnected simplicial complex
is a complex which is not connected. That is, the vertex set $[n]$ of
$\Delta$ can be written as disjoint union of two non-empty subsets
$V_1$ and $V_2$ of $[n]$ such that no face of $\Delta$ has vertices in
both $V_1$ and $V_2$, see \cite{BP} and \cite{MABY}.

We define now the line graph $L(G)$, which provides the main
streamline of this work, see \cite{HN}.
\begin{Definition}
\rm{Let $G$ be a finite simple graph. The graph $L(G)$ is said to be
line graph of $G$ if each vertex of $L(G)$ represents an edge of $G$
and two vertices of $L(G)$ are adjacent if and only if their
corresponding edges are incident in $G$.}
\end{Definition}
\begin{Example} {\rm The graph $G$ and its
Line graph $L(G)$ are given in Figure ~\ref{fig:f1}.
\begin{figure}[htbp]
        \centerline{\includegraphics[width=14.0cm]{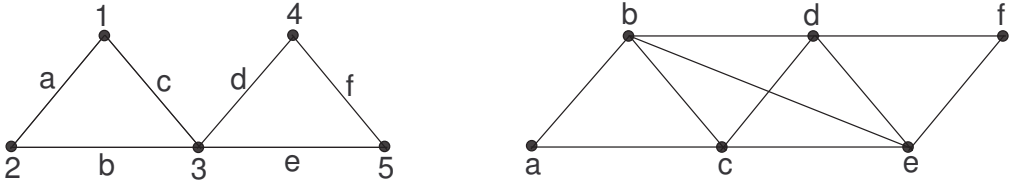}}
        \caption {Graph $G$ and its Line Graph $L(G)$}
         \label{fig:f1}
        \end{figure}}
\end{Example}
\section{Topological Characterizations of Line Simplicial Complexes}

The following definition plays a key role in the structural study of line graph $L(G)$.
\begin{Definition} {\rm Let $G$ be a  finite simple graph on the vertex set
$V(G)=[n]$. Let $E(G)=\{e_{i,j}=\{i,j\}|i,j\in V(G)\}$ be the edge set of $G$. We
define the set of line indices $\Upsilon(G)$ associated to the graph $G$ as the
collection of subsets of $V(G)$ such that if $e_{i,j}$ and $e_{j,k}$
are adjacent in $L(G)$, then $F_{i,j,k}=\{i,j,k\}\in \Upsilon(G)$ or
if $e_{i,j}$ is an isolated vertex in $L(G)$ then
$F_{i,j}=\{i,j\}\in \Upsilon(G)$.

The line index is said to be a Gallai index if the incident edges $e_{i,j}$ and $e_{j,k}$ of $G$ do not span a triangle in $G$. We denote the set of Gallai indices by $\Omega_{\Gamma}(G)$, see \cite{AM} and \cite{AKNS}. The line index is said to be an anti-Gallai index if the incident edges $e_{i,j}$ and $e_{j,k}$ of $G$ lie on a triangle in $G$. We denote the set of anti-Gallai indices by $\Omega_{\Gamma'}(G)$. The set of line indices $\Upsilon(G)$ contains $\Omega_{\Gamma}(G)$ and $\Omega_{\Gamma'}(G)$ as spanning subsets.}
\end{Definition}

\begin{Definition} {\rm A line simplicial complex $\Delta_L(G)$ of $G$ is
a simplicial complex on the vertex set $V(G)$ such that
$$\Delta_L(G)=<F \ | \ F\in \Upsilon(G)>,$$ where $\Upsilon(G)$ is
the set of line indices of $G$.}
\end{Definition}

 Similarly, the Gallai and anti-Gallai simplicial complexes $\Delta_{\Gamma}(G)$ and $\Delta_{\Gamma'}(G)$
 are generated by Gallai and anti-Gallai indices, respectively. We refer \cite{AM} and \cite{AKNS} for Gallai
 simplicial complex. The line simplicial complex $\Delta_L(G)$ contains Gallai and anti-Gallai simplicial
 complexes as spanning subcomplexes. The anti-Gallai simplicial complex $\Delta_{\Gamma'}(G)$ is complement of
 $\Delta_{\Gamma}(G)$ in $\Delta_L(G)$.

We prove first necessary and sufficient condition for
connectedness of the line simplicial complex $\Delta_L(G)$.

\begin{Theorem}\label{t1}
Let $G$ be a finite simple graph on the vertex set $[n]$. Then, $G$ is connected if
and only if the line simplicial complex $\Delta_L(G)$ is connected.
\end{Theorem}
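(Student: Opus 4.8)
The plan is to translate the connectivity of $\Delta_L(G)$ into a purely graph-theoretic statement by first pinning down its facets. Since every generator in $\Upsilon(G)$ has at most three vertices, the facets of $\Delta_L(G)$ are exactly the $2$-faces $\{i,j,k\}$ coming from a cherry (a pair of incident edges $e_{i,j},e_{j,k}$) together with the $1$-faces $\{i,j\}$ coming from isolated edges of $G$. In particular every facet contains at least one edge of $G$, and a vertex $v$ lies in $\Delta_L(G)$ if and only if $v$ is an endpoint of some edge, i.e.\ $v$ is not isolated in $G$. I would flag at the outset the one genuine hypothesis needed: $G$ should have no isolated vertices. Otherwise $G=$ (one edge)$\,\sqcup\,$(one isolated point) is disconnected while $\Delta_L(G)=\langle\{i,j\}\rangle$ is connected, so the stated equivalence fails. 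Under this assumption the vertex set of $\Delta_L(G)$ equals $V(G)$.

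For the direction $G$ connected $\Rightarrow \Delta_L(G)$ connected I would argue directly from the facet-chain definition. Given facets $F,\tilde F$, pick edges $e=\{a,b\}\subseteq F$ and $\tilde e=\{a',b'\}\subseteq \tilde F$ of $G$, and choose a shortest path $a=u_0,u_1,\dots,u_r=a'$ in $G$. When $r\ge 2$, each consecutive pair $u_tu_{t+1},u_{t+1}u_{t+2}$ is a cherry, giving facets $T_t=\{u_t,u_{t+1},u_{t+2}\}$ with $T_t\cap T_{t+1}\supseteq\{u_{t+1},u_{t+2}\}\ne\emptyset$; since $a\in F\cap T_0$ and $a'\in T_{r-2}\cap\tilde F$, the chain $F,T_0,\dots,T_{r-2},\tilde F$ connects $F$ to $\tilde F$. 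The cases $r\le 1$ are handled by exhibiting a single facet $S$ containing both $a$ and $a'$ (a cherry-triangle when the edge $\{a,a'\}$ is non-isolated, the isolated-edge facet $\{a,a'\}$ otherwise), giving the chain $F,S,\tilde F$; and $r=0$ means $a=a'\in F\cap\tilde F$.

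For the converse I would use the vertex-partition formulation of disconnectedness and prove the contrapositive. If $G$ is disconnected, write $V(G)=V_1\sqcup V_2$ with no edge between the parts; both parts are nonempty and, by the no-isolated-vertices assumption, both contain endpoints of edges, hence vertices of $\Delta_L(G)$. Every cherry and every isolated edge lies inside a single connected component of $G$, so every generator, and therefore every face, has all of its vertices in $V_1$ or all in $V_2$. Thus no face meets both parts and $\Delta_L(G)$ is disconnected.

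A slicker unified alternative, which I would at least mention, is to pass to the $1$-skeleton $\Sigma=\Delta_L(G)^{(1)}$: the facet-chain notion of connectivity is equivalent to connectivity of $\Sigma$ as a graph, since each facet is a simplex (its vertices are pairwise joined by $1$-faces) and intersecting facets share a vertex. One then checks that $\Sigma$ and $G$ have the same connected components, because every edge of $G$ is a $1$-face of $\Delta_L(G)$ while every $1$-face $\{u,v\}$ lies in a single generator and hence within one component of $G$; connectivity of $\Sigma$ and of $G$ therefore coincide. In either route the difficulty is not depth but care: correctly identifying the facets, isolating the no-isolated-vertex hypothesis (without which the theorem is false), and reconciling the facet-chain and vertex-partition conventions introduced in the preliminaries.
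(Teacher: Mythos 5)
Your proposal is correct, and it is substantially more careful than the paper's own proof, from which it diverges in both directions. The paper proves the forward implication by contradiction: assuming a vertex partition $V_1\sqcup V_2$ witnessing disconnectedness, it notes that $\Delta_L(G)$ is pure of dimension $2$ for connected $G$ with $n\geq 4$, and then asserts the existence of facets $F_{i,j,k}$ and $F_{l,m,p}$ ``for every'' triple in $V_1$ and in $V_2$ --- which is false as stated (not every triple spans a facet) and only gestures at the intended contradiction, namely that connectedness of $G$ forces an edge, hence a line index, meeting both parts. Your direct construction --- chaining the cherry-triangles $T_t=\{u_t,u_{t+1},u_{t+2}\}$ along a shortest path, with the degenerate cases $r\leq 1$ treated separately --- replaces this with a complete argument. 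For the converse, the paper picks $r,s$ in different components and jumps from ``no face contains both $r$ and $s$'' to disconnectedness, which under the facet-chain definition is a non sequitur; your contrapositive supplies the missing step by showing that every generator, hence every face, lies inside a single component of $G$, thereby producing the required partition with facets on both sides. Finally, your isolated-vertex caveat is a genuine correction rather than pedantry: for $G$ a single edge together with isolated vertices (which can be arranged with $n\geq 4$, so the paper's ``trivial for $n\leq 3$'' disclaimer does not absorb it), $\Delta_L(G)$ consists of a single facet and is connected in the facet-chain sense while $G$ is disconnected, so the theorem as literally stated fails; this also exposes that the paper's two formulations of disconnectedness --- the facet-chain one and the vertex-partition one, which it treats as interchangeable --- disagree exactly when some vertex of $[n]$ lies in no face. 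In short, the paper buys brevity by leaning on unproved equivalences and an overstated purity argument; your route buys a statement that is actually true as qualified, at the cost of one explicit hypothesis and a short case analysis.
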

\begin{proof}
Let $G$ be a finite simple graph on the vertex set $[n]$. For $n\leq
3$, the result is trivial. Therefore, we take $n\geq 4$.

We establish first direct implication. On contrary, we assume
that the line simplicial complex $\Delta_L(G)$ is not connected. By
definition, there exists two non-empty subsets $V_1$ and $V_2$ of
$[n]$ such that $[n]=V_1\cup V_2$ and $V_1\cap V_2=\emptyset$ with
the property that any facet of $\Delta_L(G)$ either has vertices
from $V_1$ or $V_2$. Since $G$ is connected graph on the vertex set $[n]$ with
$n\geq 4$, therefore the line simplicial complex
$\Delta_L(G)$ is pure of dimension $2$. So, there
are facets, say $F_{i,j,k}, F_{l,m,p}\in\Delta_L(G)$ for every
$i\neq j\neq k\in V_1\subset [n]$ and for every $l\neq m\neq p\in
V_2\subset [n]$ such that $e_{i,j},e_{j,k}$ and
$e_{l,m},e_{m,p}$ are adjacent vertices of the line graph $L(G)$. Therefore,
the edges $e_{i,j},e_{j,k}$ and $e_{l,m},e_{m,p}$ are incident in
$G$ for every $i,j,k\in V_1$ and for every $l,m,p\in V_2$  with $[n]=V_1\cup V_2$ such that $V_1\cap V_2=\emptyset$, a
contradiction.

Now, we prove converse implication. On contrary,
we assume that the graph $G$ is not connected. That is, there exist
two vertices, say $r,s\in G$ such that no path in $G$ has $r$ and
$s$ as end points. It implies that there is no face of $\Delta_L(G)$
containing both vertices $r$ and $s$ i.e. $\Delta_L(G)$ is not
connected, a contradiction. Hence the result.
\end{proof}

We establish now the relation between Euler characteristics of line and Gallai simplicial complexes.

\begin{Theorem}\label{t2}
Let $\Delta_L(G)$ and $\Delta_{\Gamma}(G)$ be line and Gallai
simplicial complexes of a finite simple graph $G$. Then,
the Euler characteristic of the line simplicial complex $\Delta_L(G)$ is given by
$$\chi(\Delta_L(G))=\chi(\Delta_{\Gamma}(G))+\mid \Omega_{\Gamma'}(G)\mid,$$
where $\mid \Omega_{\Gamma'}(G)\mid$ is the number of anti-Gallai
indices associated to $G$.
\end{Theorem}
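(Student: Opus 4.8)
The plan is to read the identity off the excision property of the Euler characteristic stated in the introduction, applied to the subcomplex $C=\Delta_{\Gamma}(G)$. Since $\Delta_{\Gamma}(G)$ is a (spanning) subcomplex of $\Delta_L(G)$ it is a closed subset, so excision yields
$$\chi(\Delta_L(G))=\chi(\Delta_{\Gamma}(G))+\chi(\Delta_L(G)\setminus\Delta_{\Gamma}(G)).$$
Thus the theorem is equivalent to the single assertion $\chi(\Delta_L(G)\setminus\Delta_{\Gamma}(G))=|\Omega_{\Gamma'}(G)|$, and the task becomes to determine precisely which faces of $\Delta_L(G)$ lie outside $\Delta_{\Gamma}(G)$ and what they contribute to the alternating sum.

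First I would locate the top-dimensional faces. Every $2$-face of $\Delta_L(G)$ is a $3$-element line index $F_{i,j,k}$ arising from an incident pair $e_{i,j},e_{j,k}$, and by definition such a pair either spans no triangle (so $F_{i,j,k}\in\Omega_{\Gamma}(G)$) or spans a triangle (so $F_{i,j,k}\in\Omega_{\Gamma'}(G)$); these alternatives are mutually exclusive. Hence the $2$-faces of $\Delta_L(G)$ are exactly $\Omega_{\Gamma}(G)\sqcup\Omega_{\Gamma'}(G)$, while those of $\Delta_{\Gamma}(G)$ are exactly $\Omega_{\Gamma}(G)$. Consequently the $2$-faces removed in passing to $\Delta_{\Gamma}(G)$ are precisely the anti-Gallai indices, and each contributes $(-1)^{2}=+1$, accounting for the term $|\Omega_{\Gamma'}(G)|$.

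The main obstacle is to show that nothing else is removed, i.e. that $\Delta_L(G)$ and $\Delta_{\Gamma}(G)$ have the same $1$-skeleton, so that the deleted part $\Delta_L(G)\setminus\Delta_{\Gamma}(G)$ consists of the anti-Gallai $2$-faces alone. The vertices coincide because the complexes are spanning, and the $1$-dimensional line indices coming from isolated edges span no triangle and so already sit in $\Delta_{\Gamma}(G)$. What remains is the delicate point: for each anti-Gallai triangle $\{i,j,k\}$ one must check that its three edges $\{i,j\},\{j,k\},\{i,k\}$ are faces of some Gallai index, rather than occurring only as subfaces of triangles. I would attack this by showing that each such edge of $G$ is incident in $G$ to a further edge with which it spans no triangle, thereby exhibiting an explicit Gallai facet through it; this is exactly the step where the combinatorics of $G$ enters. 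Granting it, $\Delta_L(G)\setminus\Delta_{\Gamma}(G)$ reduces to the $|\Omega_{\Gamma'}(G)|$ anti-Gallai $2$-faces, so $\chi(\Delta_L(G)\setminus\Delta_{\Gamma}(G))=|\Omega_{\Gamma'}(G)|$ and the formula follows.
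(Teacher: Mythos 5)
Your reduction via excision to the single claim $\chi(\Delta_L(G)\setminus\Delta_{\Gamma}(G))=|\Omega_{\Gamma'}(G)|$, together with your identification of the crux --- that the two complexes share their $1$-skeleton while their $2$-faces differ exactly by the anti-Gallai indices, each contributing $+1$ --- is the right analysis, and it is in fact more substantive than the paper's own proof, which splits $G$ into connected components using Theorem \ref{t1} and excision and then simply asserts the per-component identity $\chi(\Delta_L(G_k))=\chi(\Delta_{\Gamma}(G_k))+|\Omega_{\Gamma'}(G_k)|$ ``by definition,'' offering no argument for the crux at all. Your disjointness claim is also correct: if a line index $\{i,j,k\}$ is a triangle of $G$, then every incident pair among its three edges spans that triangle, so the same set cannot also arise as a Gallai index.

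However, the step you explicitly leave open (``Granting it'') is a genuine gap, and the specific attack you propose for it would fail. It is not true that every edge of an anti-Gallai triangle is incident in $G$ to a further edge with which it spans no triangle: in $K_3$ and $K_4$ no edge has this property, and even in the paper's own example (Figure \ref{fig:f2}) the edge $e_{1,2}$ fails it, since both of its incident edges $e_{1,3}$ and $e_{2,3}$ complete the triangle $\{1,2,3\}$. What rescues the $1$-skeleton claim is a convention you never invoke: following \cite{AM}, an edge $e_{i,j}$ that is isolated in the Gallai graph $\Gamma(G)$ --- i.e., all of its incident edges span triangles with it --- contributes the $1$-dimensional Gallai index $F_{i,j}=\{i,j\}$ as a facet of $\Delta_{\Gamma}(G)$; this is exactly why the paper's example has $\Delta_{\Gamma}(G)=\langle\{1,2\},\{1,3,4\},\{2,3,4\}\rangle$. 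With that case added your dichotomy closes: a $1$-face of $\Delta_L(G)$ is either a pair inside a Gallai $3$-index or an edge of $G$, and an edge of $G$ lies in a Gallai $3$-index when some incident edge forms no triangle with it, and otherwise is itself a Gallai facet by the isolated-vertex rule. Without this the argument --- and indeed the stated formula --- breaks: reading $\Delta_{\Gamma}(K_4)$ as generated by the (empty) set of $3$-element Gallai indices gives $\chi(\Delta_L(K_4))=4-6+4=2$ but $\chi(\Delta_{\Gamma}(K_4))+|\Omega_{\Gamma'}(K_4)|=0+4=4$, whereas the isolated-edge convention gives $\chi(\Delta_{\Gamma}(K_4))=4-6=-2$ and restores the identity. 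So your outline is salvageable, but the missing lemma must be proved with this extra case, not the one you proposed.
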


\begin{proof}
Let $G$ be a finite simple graph consisting of $t$ connected
components $G_1,\ldots,G_t$. Then, $G=\cup_{k=1}^t\,\, G_k$ such
that $G_i\cap G_j=\emptyset$ for every $i,j=1,\ldots,t$ with $i\neq
j$. By Theorem \ref{t1}, the line simplicial complex
$\Delta_L(G)$ also consists of $t$ connected components
$\Delta_L(G_1),\ldots,\Delta_L(G_t)$. Therefore, the line simplicial complex can be expressed as
$\Delta_L(G)=\cup_{k=1}^t\,\,\Delta_L(G_k)$ such that
$\Delta_L(G_i)\cap\Delta_L(G_j)=\emptyset$ for all
$i,j=1,\ldots,t$ with $i\neq j$.

By the excision property, the Euler characteristic of the line
simplicial complex $\Delta_L(G)$ is given by
$$\chi(\Delta_L(G))=\sum\limits_{k=1}^t\chi(\Delta_L(G_k)).$$
Let $\Delta_{\Gamma}(G_k)$ and $\Delta_{\Gamma'}(G_k)$ be Gallai and anti-Gallai simplicial complexes associated to each connected component $G_k$ of $G$ for $k=1,\ldots,t$. Then, each connected component $\Delta_L(G_k)$ of line simplicial complex contains $\Delta_{\Gamma}(G_k)$ and $\Delta_{\Gamma'}(G_k)$ as spanning subcomplexes for $k=1,\ldots,t$. Therefore, by definition, the Euler characteristic of each connected component $\Delta_L(G_k)$  of line simplicial complex is given by
$$\chi(\Delta_L(G_k))=\chi(\Delta_{\Gamma}(G_k))+\mid \Omega_{\Gamma'}(G_k)\mid,$$
where $\Omega_{\Gamma'}(G_k)$ is the set of anti-Gallai indices associated to $G_k$  for $k=1,\ldots,t$.
Consequently,
$$\chi(\Delta_L(G))=\sum\limits_{k=1}^t\chi(\Delta_{\Gamma}(G_k))+\sum\limits_{k=1}^t\mid \Omega_{\Gamma'}(G_k)\mid=\chi(\Delta_{\Gamma}(G))+\mid \Omega_{\Gamma'}(G)\mid$$
due to excision property. Hence proved.
\end{proof}

\begin{Example}
\rm{Consider the graph $G$ given in Figure ~\ref{fig:f2}.
\begin{figure}[htbp]
        \centerline{\includegraphics[width=4.0cm]{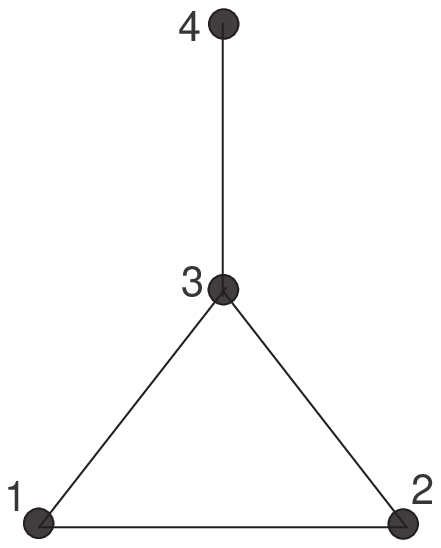}}
        \caption {Graph $G$}
         \label{fig:f2}
        \end{figure}}
\end{Example}
Then $\Delta_L(G)=<\{1,2,3\},\{1,3,4\},\{2,3,4\}>$ and
$\chi(\Delta_L(G))=f_0-f_1+f_2=4-6+3=1$, where $f_i$ is the
$i$-dimensional faces of $\Delta_L(G)$.\\ Also,
$\Delta_{\Gamma}(G)=<\{1,2\},\{1,3,4\},\{2,3,4\}>$ and
$\chi(\Delta_{\Gamma}(G))=g_0-g_1+g_2=4-6+2=0$, where $g_i$ is the
$i$-dimensional faces of $\Delta_{\Gamma}(G)$.\\
And $|\Omega_{\Gamma'}(G)|=|\{1,2,3\}|=1$. Therefore,
$$\chi(\Delta_L(G))=\chi(\Delta_{\Gamma}(G))+\mid \Omega_{\Gamma'}(G)\mid,$$
where $\mid \Omega_{\Gamma'}(G)\mid$ is the number of anti-Gallai
indices associated to $G$.
\begin{Example}\label{ex1}
\rm{Let $W_{n+1}$ be the wheel graph on $n+1$ vertices having edge
set
$E(W_{n+1})=\{e_{1,2},\ldots,e_{n,1},e_{1,n+1},\ldots,e_{n,n+1}\},$
as shown in the Figure ~\ref{fig:f3}.
\begin{figure}[htbp]
        \centerline{\includegraphics[width=7.0cm]{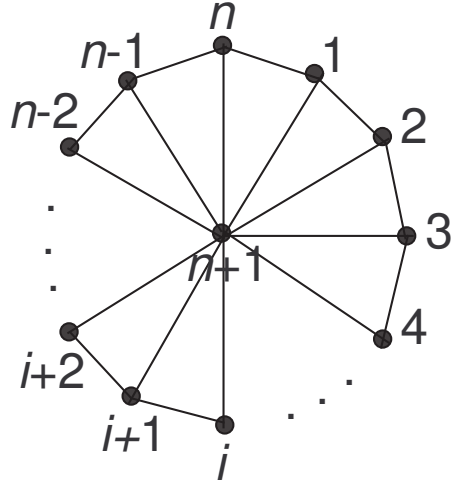}}
        \caption {Wheel Graph $W_{n+1}$}
         \label{fig:f3}
        \end{figure}}
\end{Example}
Then, the line indices of the wheel graph are given by\\
$\Upsilon(W_{n+1})=\{F_{1,2,3},\ldots,F_{n,1,2},F_{1,2,n+1},F_{1,3,n+1},\ldots,F_{1,n,n+1},
F_{2,3,n+1},
\ldots,\\F_{2,n,n+1},\ldots,F_{n-1,n,n+1}\}$. We compute first the
Euler characteristic of the line simplicial complex
$\Delta_L(W_{n+1})$ for $n\geq 4$. There are $(n+1)$ 0-dimensional
faces or vertices in $\Delta_L(W_{n+1})$ i.e. $\alpha_0=n+1$. Now,
the number of $1$-dimensional faces of $\Delta_L(W_{n+1})$ is given
by $\alpha_1=|\{j,k\}|={n+1 \choose 2}=\frac{n(n+1)}{2}$, where
$1\leq j\neq k\leq n+1$. Next, we compute the number of
$2$-dimensional faces of the form  $\{j,k,l\}\in \Delta_L(W_{n+1})$
with $1\leq j, k\leq n$ and $1\leq l\leq n+1$ such that $j\neq k\neq
l$.
\begin{enumerate}
  \item $|\{j,j+1,j+2\}|=n-2$, where $1\leq j\leq n-2$;
  \item $|\{n-1,n,1\}|=1$;
  \item $|\{n,1,2\}|=1$;
  \item $|\{j,k,n+1\}|={n \choose 2}=\frac{n(n-1)}{2}$, where $1\leq j\neq
  k\leq n$.
\end{enumerate}
Adding from $(1)$ to $(4)$, we get $\alpha_2=n-2+1+1+\frac{n(n-1)}{2}=\frac{n(n+1)}{2}$.\\
Thus, we obtain
$$\chi(\Delta_L(W_{n+1}))=\alpha_0-\alpha_1+\alpha_2=(n+1)-\frac{n(n+1)}{2}+\frac{n(n+1)}{2}=n+1,$$
where $n\geq 4$. By definition, the anti-Gallai indices of the wheel graph $W_{n+1}$ are given by
$$\Omega_{\Gamma'}(W_{n+1})=\{F_{1,2,n+1},F_{2,3,n+1},\ldots,F_{n,1,n+1}\}.$$
It implies that $\mid\Omega_{\Gamma'}(W_{n+1})\mid=n$. Therefore, by Theorem \ref{t2},
$$\chi(\Delta_{\Gamma}(W_{n+1}))=\chi(\Delta_L(W_{n+1}))-\mid\Omega_{\Gamma'}(W_{n+1})\mid=n+1-n=1$$
with $n\geq 4$.
\begin{Example}\label{ex2}
\rm{Let $F_n$ be Friendship graph on $2n+1$ vertices with edge set
$$E(F_{n})=\{e_{1,2},e_{3,4},\ldots,e_{2n-1,2n},e_{1,2n+1},e_{2,2n+1},\ldots,e_{2n,2n+1}\},$$
as shown in the Figure ~\ref{fig:f4}.
\begin{figure}[htbp]
        \centerline{\includegraphics[width=8.0cm]{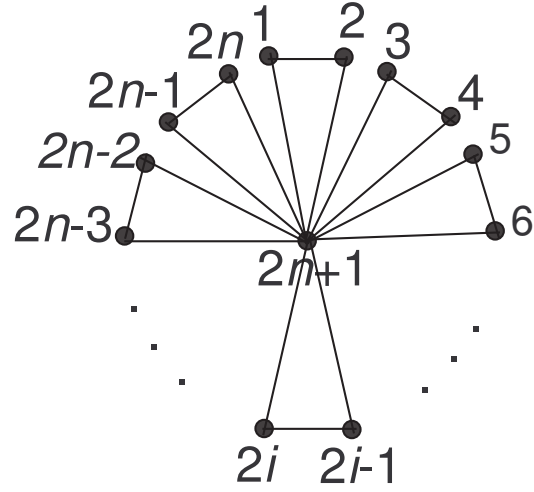}}
        \caption {Friendship Graph $F_{n}$}
         \label{fig:f4}
        \end{figure}}
\end{Example}
Then, the Gallai indices of $F_{n}$ are given by\\
$\Omega_{\Gamma}(F_{n})=\{F_{1,2},\ldots,F_{2n-1,2n},F_{1,3,2n+1},\ldots,F_{1,2n,2n+1},
F_{2,3,2n+1},\ldots,
F_{2,2n,2n+1},\\F_{3,5,2n+1},\ldots,F_{2n-2,2n,2n+1}\}$, see \cite{AKNS}. We compute first the Euler characteristic of the Gallai simplicial complex $\Delta_{\Gamma}(F_{n})$ for $n\geq 2$.
Since, the friendship graph $F_{n}$ has $(2n+1)$ vertices, therefore the number of $0$-dimensional faces in $\Delta_{\Gamma}(F_{n})$ is
$\alpha_0=2n+1$. Moreover, the number of $1$-dimensional faces of $\Delta_{\Gamma}(F_{n})$ is given by $\alpha_1=|\{i,j\}|={2n+1 \choose 2}=n(2n+1)$, where $1\leq i\neq j\leq
2n+1$. Now, we compute the number of $2$-dimensional faces of the form $\{i,j,k\}\in \Delta_{\Gamma}(F_{n})$ with $1\leq i\leq 2n-2$, $3\leq j\leq 2n$ and $k=2n+1$ such that $i\neq j$.\\
(1)\,\,\,\,\, $|\{i,j,2n+1\}|=2(2n-2)$ with $i\in \{1,2\}$ and $3\leq j\leq
2n$;\\
(2)\,\,\,\,\, $|\{i,j,2n+1\}|=2(2n-4)$ with $i\in \{3,4\}$ and $5\leq j\leq 2n$;\\
  $\vdots$\\
(n-2)\, $|\{i,j,2n+1\}|=2(4)$ with $i\in \{2n-5,2n-4\}$ and
$2n-3\leq j\leq 2n$;\\
(n-1)\, $|\{i,j,2n+1\}|=2(2)$ with $i\in \{2n-3,2n-2\}$ and
$2n-1\leq j\leq 2n$.\\
Adding from $(1)$ to $(n-1)$, we obtain\\ $\alpha_2=2(2n-2)+2(2n-4)+\cdots+2(4)+2(2)=2n(n-1)$.\\
Therefore, we compute
$$\chi(\Delta_{\Gamma}(F_{n}))=\alpha_0-\alpha_1+\alpha_2=(2n+1)-n(2n+1)+2n(n-1)=1-n,$$
where $n\geq 2$. Note that $\mid\Omega_{\Gamma'}(F_n)\mid=n$. Hence,
by Theorem \ref{t2}, $\chi(\Delta_L(F_{n}))=1$ with $n\geq 2$.
%
\begin{Remark}
\rm{Let $G$ be a finite simple graph. If there is no triangle in $G$, then there will be no anti-Gallai index in $\Upsilon(G)$ i.e. $\Omega_{\Gamma'}(G)=\emptyset$ and the line and Gallai simplicial complexes of $G$ are coincident.

It can be easily seen that the line simplicial complexes
associated to friendship graph $F_{n}$ and star graph $S_{2n}$ are
the same.}
\end{Remark}

\section{Shellability of Line and Anti-Gallai Simplicial Complexes}
We introduce first a few notions.
\begin{Definition}
\rm{A simplicial complex $\Delta$ over $[n]$ is $\textbf{shellable}$
if its facets can be ordered $F_1,F_2,\ldots,F_s$ such that, for all
$2\leq i\leq s$ the subcomplex
$$\hat{\Delta}_{<F_i>}=<F_1,F_2,\ldots,F_{i-1}>\cap <F_i>$$ is pure
of dimension $dim(F_i)-1$.}
\end{Definition}

\begin{Definition}
\rm{Let $I\subset S=k[x_1,\ldots,x_n]$ be a monomial ideal. We say
that $I$ has $\textbf{Linear Residuals}$ if there exist an ordered
minimal monomial system of generators $\{m_1,m_2,\ldots,m_r\}$ of
$I$ such that Res$(I_i)$ is minimally generated by linear monomials
for $1< i \leq r$, where Res$(I_i)=\{u_1,u_2,\ldots,u_{i-1}\}$ such
that $u_k=\frac{m_i}{gcd(m_k,m_i)}$ for all $1\leq k\leq i-1$.}
\end{Definition}
The following result provides effective necessary and sufficient
condition for the shellability of a simplicial complex, see
\cite{AKNS}.
\begin{Theorem} \label{t3} \cite{AKNS} \rm{Let $\Delta$ be a simplicial complex of
dimension $d$ over $[n]$. Then $\Delta$ will be shellable if and
only if $I_\mathcal{F}(\Delta)$ has linear residuals.}
\end{Theorem}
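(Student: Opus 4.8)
The plan is to translate the combinatorial shelling condition into the algebraic linear-residuals condition by a single dictionary built on the facet ideal, and then to check that the two conditions agree index by index. First I would fix the standard correspondence sending each facet $F_i$ to its squarefree generator $m_i=\prod_{j\in F_i}x_j$ of $I_{\mathcal F}(\Delta)$; since $\{m_1,\ldots,m_s\}$ is then the minimal monomial generating set of $I_{\mathcal F}(\Delta)$, an ordering of the facets is literally the same datum as an ordering of the generators, and $r=s$. Because every $m_i$ is squarefree, for $k<i$ one has $\gcd(m_k,m_i)=\prod_{j\in F_i\cap F_k}x_j$, so the residual monomial
$$u_k=\frac{m_i}{\gcd(m_k,m_i)}=\prod_{j\in F_i\setminus F_k}x_j$$
is again squarefree with $\deg u_k=|F_i|-|F_i\cap F_k|$. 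The whole proof then becomes a comparison of two descriptions of the combinatorial data attached to each index $i$.

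Next I would describe the faces on both sides. The subcomplex $\hat{\Delta}_{<F_i>}=<F_1,\ldots,F_{i-1}>\cap<F_i>$ consists exactly of those $G\subseteq F_i$ with $G\subseteq F_k$ for some $k<i$, i.e. $G\subseteq F_i\cap F_k$; hence its facets are the maximal members of the family $\{F_i\cap F_k:1\le k<i\}$. On the algebraic side the decisive point is the order-reversing equivalence
$$u_l\mid u_k\iff F_i\setminus F_l\subseteq F_i\setminus F_k\iff F_i\cap F_k\subseteq F_i\cap F_l,$$
from which $u_k$ survives in the unique minimal monomial generating set of $\mathrm{Res}(I_i)=(u_1,\ldots,u_{i-1})$ precisely when $F_i\cap F_k$ is maximal in that family, that is, precisely when $F_i\cap F_k$ is a facet of $\hat{\Delta}_{<F_i>}$. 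This yields a bijection between the minimal generators of the residual ideal and the facets of the intersection subcomplex.

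It then remains to match the two size conditions under this bijection. A minimal generator $u_k$ is linear exactly when $\deg u_k=1$, i.e. $|F_i\cap F_k|=|F_i|-1$, i.e. $\dim(F_i\cap F_k)=\dim(F_i)-1$. Since distinct facets are incomparable we always have $F_i\cap F_k\subsetneq F_i$, so every face of $\hat{\Delta}_{<F_i>}$ has dimension at most $\dim(F_i)-1$; consequently $\hat{\Delta}_{<F_i>}$ is pure of dimension $\dim(F_i)-1$ exactly when each of its facets has that dimension. Through the bijection these two statements are identical, so $\mathrm{Res}(I_i)$ is minimally generated by linear monomials if and only if $\hat{\Delta}_{<F_i>}$ is pure of dimension $\dim(F_i)-1$. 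Quantifying over $1<i\le r$ and identifying the shelling order with the order of generators converts the two existential statements into one another, which proves the theorem.

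The step I expect to require the most care is the middle one: verifying that passing to the minimal generating set of $\mathrm{Res}(I_i)$ matches passing to the facets (the maximal intersections) of $\hat{\Delta}_{<F_i>}$. This is where the order-reversing divisibility relation must be handled cleanly, and where one must confirm that the phrase ``minimally generated by linear monomials'' forces every element of the \emph{minimal} generating set to be linear rather than merely some generating set. Once this bijection is secured, the degree-versus-dimension bookkeeping and the purity argument are routine.
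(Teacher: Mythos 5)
Your proposal is correct, but there is nothing in the paper to compare it against: Theorem \ref{t3} is stated with the citation \cite{AKNS} and no proof is given in this paper at all --- it is imported as a black box and then applied in Theorems \ref{t6}, \ref{t7}, \ref{t5} and the non-shellability examples. Your dictionary argument is the natural proof of the equivalence, and its two substantive points are exactly the ones you flag. First, the bijection: since each $u_k=m_i/\gcd(m_k,m_i)$ is squarefree with support $F_i\setminus F_k$, divisibility among the $u_k$ is complementation-reversed inclusion of the intersections $F_i\cap F_k$ inside $F_i$, so the divisibility-minimal $u_k$ --- which form the unique minimal monomial generating set of the monomial ideal $(u_1,\ldots,u_{i-1})$ --- correspond precisely to the maximal intersections, i.e.\ to the facets of $\hat{\Delta}_{<F_i>}$; your degree count $\deg u_k=|F_i|-|F_i\cap F_k|$ then translates ``every facet has dimension $\dim F_i-1$'' into ``every minimal generator is linear,'' and since every face of $\hat{\Delta}_{<F_i>}$ has dimension at most $\dim F_i-1$ (facets being incomparable, $F_i\cap F_k\subsetneq F_i$), this is the same as purity of dimension $\dim F_i-1$.

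Second, you resolved the definitional ambiguity the right way, and this matters: the paper literally defines $\mathrm{Res}(I_i)$ as the \emph{set} $\{u_1,\ldots,u_{i-1}\}$, and under that literal reading the theorem would be false, since a shelling happily produces non-linear $u_k$ that are divisible by linear ones. Your ideal-theoretic reading (pass to the minimal monomial generating set of the ideal generated by the $u_k$) is the one the paper itself uses implicitly: in the proof of Theorem \ref{t6}, for $m_{F_{i,j,2n+1}}$ only the two linear residual monomials $x_i$ and $x_j$ are retained, although earlier generators such as $m_{F_{1,l,2n+1}}$ with $l\notin\{i,j\}$ produce quadratic residual monomials like $x_ix_j$, discarded exactly because the linear ones divide them. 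The only cosmetic point left open is the degenerate case $F_i\cap F_k=\emptyset$ with $|F_i|=1$, where you should state the convention that $\langle\emptyset\rangle$ is pure of dimension $-1$ so the equivalence holds verbatim; with that convention fixed, your argument is complete in both directions.
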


\begin{Theorem}\label{t6} \rm{The line simplicial complex $\Delta_L(F_n)$ associated to friendship
graph $F_n$ is shellable.}
\end{Theorem}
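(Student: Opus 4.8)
The plan is to invoke the linear-residuals criterion of Theorem~\ref{t3}, so the first task is to describe the facets of $\Delta_L(F_n)$ explicitly. I would note that in $F_n$ the only edges missing the central vertex $2n+1$ are the $n$ rim edges $e_{2i-1,2i}$, whose vertex sets $\{2i-1,2i\}$ are pairwise disjoint; hence no two rim edges are incident, and every adjacent pair of edges of $F_n$ passes through $2n+1$ (either two spokes meeting at $2n+1$, or a rim edge together with a spoke at one of its endpoints). It follows that $\Upsilon(F_n)=\{\{i,j,2n+1\}:1\le i<j\le 2n\}$, and since these indices all have three elements and none contains another, they are precisely the facets. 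Thus $\Delta_L(F_n)$ is pure of dimension $2$ with ${2n\choose 2}$ facets.

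Writing $z=x_{2n+1}$, the facet ideal becomes $I_{\mathcal F}(\Delta_L(F_n))=(\,z\,x_ix_j:1\le i<j\le 2n\,)$, namely $z$ times the edge ideal of $K_{2n}$. I would order the generators by the lexicographic order of the underlying pairs $\{i,j\}$, so that $m_1=zx_1x_2,\ m_2=zx_1x_3,\dots$. The feature that makes this order work is that every pair other than $(1,2)$ shares a vertex with an earlier pair: if $i=1$ the pair $(1,2)$ precedes it and meets it at $1$, and if $i\ge 2$ the pair $(1,i)$ precedes it and meets it at $i$.

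With the order fixed I would compute the residuals. For $m=zx_ax_b$ and an earlier $m'=zx_cx_d$, both share the factor $z$, so $u=m/\gcd(m',m)$ is free of $z$ and has degree at most $2$: one gets $u=x_b$ or $u=x_a$ when $\{c,d\}$ meets $\{a,b\}$ in exactly one vertex, and $u=x_ax_b$ when $\{c,d\}$ is disjoint from $\{a,b\}$. Hence the only possible non-linear residual at a given step is the single quadratic monomial $x_ax_b$ formed from the two rim endpoints of the current facet. The heart of the argument --- which I expect to be the real content rather than routine bookkeeping --- is that this quadratic residual is always redundant: by the lexicographic property there is an earlier pair sharing a vertex with $\{a,b\}$, and it contributes a linear residual $x_a$ or $x_b$ that divides $x_ax_b$. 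Therefore $\operatorname{Res}(I_i)$ is minimally generated by linear monomials for every $i>1$, and Theorem~\ref{t3} gives shellability.

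The single genuine case-check behind this crux is to confirm that the chosen order leaves no facet all of whose predecessors are vertex-disjoint from it; the predecessor $(1,\min\{a,b\})$ (or $(1,2)$ itself) always settles this. The same reasoning shows that any ordering of the edges of $K_{2n}$ in which each new edge touches the union of the earlier ones would serve equally well, so the lexicographic choice is merely the most convenient.
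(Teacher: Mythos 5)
Your proposal is correct and takes essentially the same route as the paper: the same identification of the facets of $\Delta_L(F_n)$ as the sets $\{i,j,2n+1\}$ with $1\le i<j\le 2n$, the same lexicographic ordering of the monomial generators $x_ix_jx_{2n+1}$ of the facet ideal, the same residual computations ($x_j$ from the predecessor $F_{1,i,2n+1}$, $x_i$ from $F_{1,j,2n+1}$), and the same appeal to Theorem~\ref{t3}. The only difference is that you explicitly verify that the quadratic residuals $x_ax_b$ coming from vertex-disjoint predecessor pairs are redundant in the minimal generating set, a bookkeeping point the paper's ``one can easily see'' leaves implicit.
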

\begin{proof} By Theorem \ref{t3}, it is sufficient to show that
$I_{\mathcal{F}}(\Delta_L(F_n))$ have linear residuals. As the line
indices of the friendship graph $F_n$ on $2n+1$ vertices are given
by
$$\Upsilon_{L}(F_{n})=\{F_{1,2,2n+1},\ldots,F_{1,2n,2n+1},F_{2,3,2n+1},\ldots,F_{2,2n,2n+1},
\ldots, F_{2n-1,2n,2n+1}\},$$ as shown in Figure ~\ref{fig:f4}. Then, the ordered minimal monomial system of generators are given by\\
$I_{\mathcal{F}}(\Delta_L(F_{n}))=(m_{F_{1,2,2n+1}},\ldots,m_{F_{1,2n,2n+1}},m_{F_{2,3,2n+1}},\ldots,m_{F_{2,2n,2n+1}},\\
\ldots,m_{F_{2n-1,2n,2n+1}})$, where $m_{F_{i,j,2n+1}}$ are the
monomial $x_ix_jx_{2n+1}$. One can easily see that
Res$(I_{m_{F_{1,j,2n+1}}})$ is minimally generated by
$$\frac{m_{F_{1,j,2n+1}}}{gcd(m_{F_{1,j-1,2n+1}},m_{F_{1,j,2n+1}})}=x_j \ ,\ \ \ \ \ \  3\leq j\leq 2n$$
Moreover, Res$(I_{m_{F_{i,j,2n+1}}})$ with $1<i<j\leq 2n$ is
minimally generated by the linear monomials $x_j$ and $x_i$ due to
$$\frac{m_{F_{i,j,2n+1}}}{gcd(m_{F_{1,i,2n+1}},m_{F_{i,j,2n+1}})}=x_j$$
and
$$\frac{m_{F_{i,j,2n+1}}}{gcd(m_{F_{1,j,2n+1}},m_{F_{i,j,2n+1}})}=x_i.$$

\end{proof}

\begin{Theorem}\label{t7} \rm{The line simplicial complex $\Delta_L(W_{n+1})$ associated to
wheel graph $W_{n+1}$ is shellable.}
\end{Theorem}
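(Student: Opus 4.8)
The plan is to apply Theorem \ref{t3}: it suffices to order the minimal monomial generators of $I_{\mathcal{F}}(\Delta_L(W_{n+1}))$ so that every residual is minimally generated by linear monomials. First I would record the two kinds of facets of $\Delta_L(W_{n+1})$ already listed in Example \ref{ex1}: the \emph{hub triples} $F_{i,j,n+1}$ with $1\le i<j\le n$, arising from spokes meeting at the centre $n+1$ (so that \emph{all} ${n \choose 2}$ outer pairs occur), and the \emph{rim triples} $F_{k,k+1,k+2}$ formed by three consecutive vertices of the outer cycle, with indices read cyclically so that $F_{n-1,n,1}$ and $F_{n,1,2}$ are included. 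Their monomials are $m_{F_{i,j,n+1}}=x_ix_jx_{n+1}$ and $m_{F_{k,k+1,k+2}}=x_kx_{k+1}x_{k+2}$.

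The ordering I would use lists \textbf{all hub triples first}, lexicographically in the pair $(i,j)$ exactly as in the proof of Theorem \ref{t6}, and then appends the rim triples in any order. On the hub block the computation is verbatim the friendship-graph argument: for $m_{F_{1,j,n+1}}$ every predecessor $F_{1,j',n+1}$ gives $\gcd=x_1x_{n+1}$, hence the single linear residual $x_j$; for $m_{F_{i,j,n+1}}$ with $1<i<j$ the predecessors $F_{1,i,n+1}$ and $F_{1,j,n+1}$ give $\gcd=x_ix_{n+1}$ and $\gcd=x_jx_{n+1}$, hence the linear residuals $x_j$ and $x_i$, while any remaining residual divides $x_ix_jx_{n+1}$ and, having degree $\ge 2$, is a multiple of $x_i$ or $x_j$ and so is non-minimal. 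Thus $\mathrm{Res}$ is minimally generated by linear monomials on the entire hub block.

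The point that makes the rim triples effortless is that every pair of outer vertices already appears as a hub facet. For $m_{F_{k,k+1,k+2}}=x_kx_{k+1}x_{k+2}$ the three earlier hub facets on the pairs $\{k,k+1\}$, $\{k+1,k+2\}$, $\{k,k+2\}$ contribute the residuals $x_{k+2}$, $x_k$, $x_{k+1}$, so all three variables occur as linear residuals. Since every residual is a divisor of $x_kx_{k+1}x_{k+2}$, any residual of degree $\ge 2$ (for instance one coming from an adjacent rim triple, which shares two vertices) is divisible by one of $x_k,x_{k+1},x_{k+2}$ and is therefore redundant; hence $\mathrm{Res}$ of each rim generator is again minimally generated by linear monomials. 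The wraparound triples $F_{n-1,n,1}$ and $F_{n,1,2}$ are handled identically, via the hub facets on $\{n-1,n\},\{n,1\},\{n-1,1\}$ and on $\{n,1\},\{1,2\},\{n,2\}$.

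With both blocks checked, Theorem \ref{t3} yields shellability. I do not anticipate a genuine obstacle: the only real computation is the hub block, which already appears for $F_n$ in Theorem \ref{t6}, and the favourable fact that all ${n \choose 2}$ outer pairs are hub facets makes the rim triples automatic once the hubs precede them. The only things to watch are the cyclic bookkeeping for the three wraparound rim triples and a confirmation that, for the small values of $n$ for which we work (so that $\Delta_L(W_{n+1})$ is genuinely pure of dimension $2$, as in Example \ref{ex1}), no rim triple collapses in a way that invalidates the three-residual argument.
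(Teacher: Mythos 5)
Your proposal is correct and follows essentially the same route as the paper's proof: the same ordering (all hub facets $F_{i,j,n+1}$ first, computed exactly as for the friendship graph in Theorem \ref{t6}, then the rim triples, whose residuals are linear because all $\binom{n}{2}$ pairs $\{i,j\}\subset[n]$ occur among the hub facets). Your uniform cyclic treatment of $F_{n-1,n,1}$ and $F_{n,1,2}$, and your explicit remark that residuals of degree $\geq 2$ are non-minimal, merely streamline what the paper does separately in its Step IV.
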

\begin{proof}   The ordered minimal monomial system of generators are given by\\
$I_{\mathcal{F}}(\Delta_L(W_{n+1}))=(m_{F_{1,2,n+1}},\ldots,m_{F_{1,n,n+1}},m_{F_{2,3,n+1}},\ldots,m_{F_{2,n,n+1}},
\ldots,m_{F_{n-1,n,n+1}},\\ m_{F_{1,2,3}},\ldots,m_{F_{n,1,2}})$,
where $m_{F_{i,j,k}}$ are the monomials $x_ix_jx_k$ associated to the facets $F_{i,j,k}$, see
Figure ~\ref{fig:f3}. We establish the result
into the following steps.\\
{\bf{Step.I.}} For the monomials
$m_{F_{1,2,n+1}},\ldots,m_{F_{1,n,n+1}}$, one can easily see that
Res$(I_{m_{F_{1,j,n+1}}})$ is minimally generated by
$$\frac{m_{F_{1,j,n+1}}}{gcd(m_{F_{1,j-1,n+1}},m_{F_{1,j,n+1}})}=x_j \ ,\ \ \ \ \ \  3\leq j\leq n.$$
{\bf{Step.II.}} For the monomials
$m_{F_{2,3,n+1}},\ldots,m_{F_{2,n,n+1}}, \ldots,m_{F_{n-1,n,n+1}}$,
Res$(I_{m_{F_{i,j,n+1}}})$ with $1<i<j\leq n$ is minimally
generated by the linear monomials $x_i$ and $x_j$ due to
$$\frac{m_{F_{i,j,n+1}}}{gcd(m_{F_{1,i,n+1}},m_{F_{i,j,n+1}})}=x_j$$
and
$$\frac{m_{F_{i,j,n+1}}}{gcd(m_{F_{1,j,n+1}},m_{F_{i,j,n+1}})}=x_i.$$
{\bf{Step.III.}} For the monomials
$m_{F_{1,2,3}},\ldots,m_{F_{n-2,n-1,n}}$, Res$(I_{m_{F_{i,j,k}}})$
with $1\leq i<j<k\leq n$ is minimally generated by the linear monomials
$x_i,x_j$ and $x_k$ due to
$$\frac{m_{F_{i,j,k}}}{gcd(m_{F_{i,j,n+1}},m_{F_{i,j,k}})}=x_k,$$
$$\frac{m_{F_{i,j,k}}}{gcd(m_{F_{j,k,n+1}},m_{F_{i,j,k}})}=x_i$$ and
$$\frac{m_{F_{i,j,k}}}{gcd(m_{F_{i,k,n+1}},m_{F_{i,j,k}})}=x_j.$$
{\bf{Step.IV.}} Finally, one can easily see that the residuals
\begin{center}
Res$(I_{m_{F_{n-1,n,1}}})=\{x_1, x_{n-1}, x_n\}$ and
Res$(I_{m_{F_{n,1,2}}})=\{x_1, x_2, x_n\}$ 
\end{center}
are minimally generated by linear monomials.
\end{proof}

\begin{Example} \rm{
Consider the line simplicial complex
$$\Delta_L(C_5)=\langle
\{1,2,3\},\{2,3,4\},\{3,4,5\},\{4,5,1\},\{5,1,2\}\rangle$$
associated to the cycle $C_5=(12345)$ on $5$ vertices. Then
$$I_{\mathcal{F}}(\Delta_L(C_5))=(m_{F_{1,2,3}},m_{F_{2,3,4}},m_{F_{3,4,5}},m_{F_{4,5,1}},m_{F_{5,1,2}}).$$
On contrary, we assume that $\Delta_L(C_5)$ is shellable. Therefore, by
Theorem \ref{t3}, $I_{\mathcal{F}}(\Delta_L(C_5))$ admits linear
residuals. Without loss of generality, we may assume that
$\mathbf{m_1}=m_{F_{1,2,3}}$ and $\mathbf{m_2}=m_{F_{2,3,4}}$. If we take
$\mathbf{m_3}=m_{F_{4,5,1}}$, then we have Res$(I_{\mathbf{m_3}})=\{x_4x_5,x_1x_5\}$
not generated by linear monomials. Therefore, $\mathbf{m_3}\neq
m_{F_{4,5,1}}$. So, we have either $\mathbf{m_3}=m_{F_{5,1,2}}$ or
$\mathbf{m_3}=m_{F_{3,4,5}}$. If $\mathbf{m_3}=m_{F_{5,1,2}}$, we have either
$\mathbf{m_4}=m_{F_{4,5,1}}$ or $\mathbf{m_4}=m_{F_{3,4,5}}$. Then, the residuals\\
$Res(I_{\mathbf{m_4}})=Res(I_{m_{F_{4,5,1}}})=\{x_1x_5,x_4\}$ and $Res(I_{\mathbf{m_4}})=Res(I_{m_{F_{3,4,5}}})=\{x_5,x_3x_4\}$\\
are not
minimally generated by linear monomials.\\  
If $\mathbf{m_3}=m_{F_{3,4,5}}$, then either
$\mathbf{m_4}=m_{F_{4,5,1}}$ or $\mathbf{m_4}=m_{F_{5,1,2}}$. Then, the residuals\\
$Res(I_{\mathbf{m_4}})=Res(I_{m_{F_{4,5,1}}})=\{x_1,x_4x_5\}$ and $Res(I_{\mathbf{m_4}})=Res(I_{m_{F_{5,1,2}}})=\{x_5,x_1x_2\}$\\
are not
minimally generated by linear monomials, which is a contradiction.}
\end{Example}

\begin{Theorem}\label{t5} The anti-Gallai simplicial complex
$\Delta_{\Gamma'}(W_{n+1})$ associated to wheel graph $W_{n+1}$ is
shellable.
\end{Theorem}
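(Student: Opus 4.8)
The plan is to apply Theorem \ref{t3}, so it suffices to prove that the facet ideal $I_{\mathcal{F}}(\Delta_{\Gamma'}(W_{n+1}))$ has linear residuals. From Example \ref{ex1} the anti-Gallai indices of $W_{n+1}$ are the $n$ triangles $F_{i,i+1,n+1}=\{i,i+1,n+1\}$ for $1\le i\le n-1$ together with the wrap-around triangle $F_{n,1,n+1}=\{n,1,n+1\}$; each consists of a rim edge and the two spokes to the hub $n+1$. First I would order the minimal monomial generators cyclically as $m_1=x_1x_2x_{n+1},\,m_2=x_2x_3x_{n+1},\,\ldots,\,m_{n-1}=x_{n-1}x_nx_{n+1},\,m_n=x_nx_1x_{n+1}$, exactly the ordering used for the spoke part in the proof of Theorem \ref{t7}.

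The key observation is that every facet contains the hub $x_{n+1}$, so $x_{n+1}$ always divides $\gcd(m_k,m_i)$ and hence never survives in a residual; thus each residual monomial $u_k=m_i/\gcd(m_k,m_i)$ is governed entirely by the rim edges. Two generators share a rim vertex precisely when their rim edges are consecutive. For $2\le i\le n-1$ the predecessor $m_{i-1}$ shares the rim vertex $i$ with $m_i$, giving the linear residual
$$\frac{m_i}{\gcd(m_{i-1},m_i)}=x_{i+1},$$
while every earlier generator $m_k$ with $k\le i-2$ has a rim edge disjoint from $\{i,i+1\}$ and contributes $m_i/\gcd(m_k,m_i)=x_ix_{i+1}$, which is divisible by $x_{i+1}$. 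Hence $\mathrm{Res}(I_i)$ is minimally generated by the single linear monomial $x_{i+1}$.

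The one facet needing separate treatment is the wrap-around generator $m_n=x_nx_1x_{n+1}$, which is consecutive to both $m_1$ (sharing rim vertex $1$) and $m_{n-1}$ (sharing rim vertex $n$). These two yield the linear residuals
$$\frac{m_n}{\gcd(m_1,m_n)}=x_n \qquad\text{and}\qquad \frac{m_n}{\gcd(m_{n-1},m_n)}=x_1,$$
while every intermediate generator $m_k$ with $2\le k\le n-2$ is disjoint on the rim from $\{n,1\}$ and contributes the quadratic monomial $x_nx_1$, which is divisible by both $x_n$ and $x_1$. Thus $\mathrm{Res}(I_n)$ is minimally generated by the linear monomials $x_n$ and $x_1$. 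Since every residual is minimally generated by linear monomials, the facet ideal has linear residuals and Theorem \ref{t3} gives shellability.

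The main point to be careful about is not exhibiting the linear residuals themselves but the redundancy claim: one must verify that the quadratic monomials produced by non-consecutive facets are genuinely non-minimal, i.e.\ divisible by the linear monomials already present, so that they do not spoil minimality of the linear generating set. This is the same subtlety implicit in Steps I--II of Theorem \ref{t7}, and here it is transparent because the common hub forces each residual into the polynomial ring on the rim variables, where the consecutive-facet residuals are single variables dividing all the others.
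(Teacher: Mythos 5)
Your proposal is correct and follows essentially the same route as the paper: the same cyclic ordering of the generators $x_ix_{i+1}x_{n+1}$, the same application of Theorem \ref{t3}, and the same residuals $x_{i+1}$ for $2\leq i\leq n-1$ and $\{x_1,x_n\}$ for the wrap-around facet $F_{n,1,n+1}$. The only difference is that you make explicit why the quadratic residuals $x_ix_{i+1}$ arising from non-consecutive facets are redundant (each being divisible by a linear residual already present), a verification the paper leaves implicit behind ``it can be easily seen.''
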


\begin{proof}
The
anti-Gallai indices of wheel graph $W_{n+1}$ are given by
$$\Omega_{\Gamma'}(W_{n+1})=\{F_{1,2,n+1},F_{2,3,n+1},\ldots,F_{n-1,n,n+1},F_{n,1,n+1}\},$$
as shown in Figure ~\ref{fig:f7}.

\begin{figure}[htbp]
        \centerline{\includegraphics[width=7.0cm]{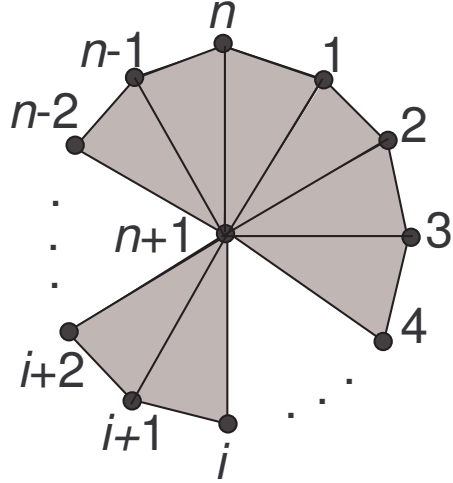}}
        \caption {Anti-Gallai simplicial complex $\Delta_{\Gamma'}(W_{n+1})$}
         \label{fig:f7}
        \end{figure}
Then, the facet ideal $I_\mathcal{F}(\Delta_{\Gamma'}(W_{n+1}))$ is
given by
$$I_\mathcal{F}(\Delta_{\Gamma'}(W_{n+1}))=(x_{F_{1,2,n+1}},x_{F_{2,3,n+1}},\ldots,x_{F_{n-1,n,n+1}},x_{F_{n,1,n+1}}),$$
where $x_{F_{i,j,n+1}}$ are the monomials $x_i x_j x_{n+1}$. It can be
easily seen that Res$(I_{x_{F_{i,i+1,n+1}}})$ is minimally generated
by
$$\frac{m_{F_{i,i+1,n+1}}}{gcd(m_{F_{i-1,i,n+1}},m_{F_{i,i+1,n+1}})}=x_{i+1} \ ,\ \ \ \ 2\leq i\leq n-1. $$
Moreover,
Res$(I_{x_{F_{n,1,n+1}}})=\{x_1,x_n\}$. Thus, the
anti-Gallai simplicial complex $\Delta_{\Gamma'}(W_{n+1})$ is
shellable.
\end{proof}

In the following examples, we see that the anti-Gallai simplicial
complex $\Delta_{\Gamma'(G)}$ are not shellable.
\begin{Example}
\rm{Let $G=Y_{3,n}$ be prism graph  having $3n$ vertices and
$3(2n-1)$ edges, as shown in \cite{AM}. The anti-Gallai indices
associated to the prism graph $Y_{3,n}$ are given by
$$\Omega_{\Gamma'}(Y_{3,n})=\{F_{1,2,3},F_{4,5,6},\ldots,F_{3n-2,3n-1,3n}\}.$$
The anti-Gallai simplicial complex $\Delta_{\Gamma'}(Y_{3,n})$
consisting of $n$ disjoint facets is pure of dimension $2$, as shown
in Figure ~\ref{fig:f8}.
\begin{figure}[htbp]
        \centerline{\includegraphics[width=10.0cm]{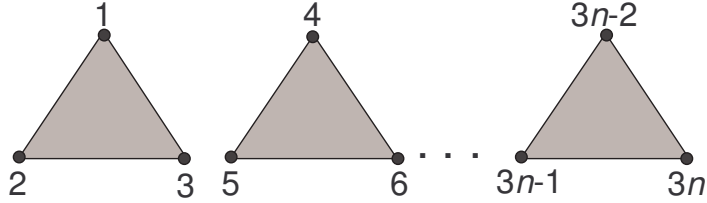}}
        \caption {Anti-Gallai simplicial complex $\Delta_{\Gamma'}(Y_{3,n})$}
         \label{fig:f8}
        \end{figure}}

Note that Res$(I_{x_{F_{j,j+1,j+2}}})$ is minimally generated by
monomials
$$\frac{m_{F_{j,j+1,j+2}}}{gcd(m_{F_{i,i+1,i+2}},m_{F_{j,j+1,j+2}})}=x_{j}x_{j+1}x_{j+2},$$ where $i,j\in \{1,4,\ldots,3n-2\}$
and $i < j$.Therefore, the facet ideal
$I_\mathcal{F}(\Delta_{\Gamma'}(Y_{3,n}))$ does not have linear
residuals for any monomial ordering of minimal system of generators of
$I_\mathcal{F}(\Delta_{\Gamma'}(Y_{3,n}))$. Hence
$\Delta_{\Gamma'}(Y_{3,n})$ is not shellable.
\end{Example}
\begin{Example}
\rm{The anti-Gallai indices associated to friendship graph $G=F_{n}$ are given by
$$\Omega_{\Gamma'}(F_n)=\{F_{1,2,2n+1},F_{3,4,2n+1},\ldots,F_{2n-1,2n,2n+1}\}.$$
So, the anti-Gallai simplicial complex $\Delta_{\Gamma'}(F_n)$
consisting of $n$ facets with a common vertex is pure of dimension
$2$, as shown in Figure ~\ref{fig:f9}.

\begin{figure}[htbp]
        \centerline{\includegraphics[width=8.0cm]{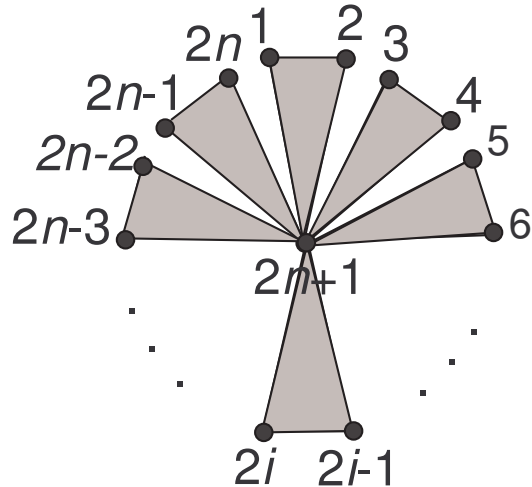}}
        \caption {Anti-Gallai simplicial complex $\Delta_{\Gamma'}(F_n)$}
         \label{fig:f9}
        \end{figure}}
The residual Res$(I_{x_{F_{j,j+1,2n+1}}})$ is minimally generated by monomials
$$\frac{m_{F_{j,j+1,2n+1}}}{gcd(m_{F_{i,i+1,2n+1}},m_{F_{j,j+1,2n+1}})}=x_{j}x_{j+1},$$ where $i,j\in \{1,3,\ldots,2n-1\}$
and $i<j$. Therefore, the facet ideal
$I_\mathcal{F}(\Delta_{\Gamma'}(F_n)$ does not have linear residuals
for any monomial ordering of minimal system of generators of
$I_\mathcal{F}(\Delta_{\Gamma'}(F_n)$. Hence $\Delta_{\Gamma'}(F_n)$
is not shellable.
\end{Example}

\end{document}